\newtheorem{prop}{Proposition}
\newtheorem{cor}{Corollary}
\newtheorem{lem}{Lemma}
\newtheorem*{thm1}{Theorem 1}
\newtheorem*{thm2}{Theorem 2}
\newtheorem{conj}{Conjecture}
\gdef\san{\mbox{\textbf{山}}}
\newcommand{\Ok}{\mathcal{O}_K}
\DeclareMathOperator{\ord}{ord}
\begin{document}
\begin{CJK}{UTF8}{min}
\title[Uniform Finiteness for CM Elliptic Curves]{A Uniform Version of a Finiteness Conjecture for CM Elliptic Curves}
\author{Abbey Bourdon}

\begin{abstract}
Let $A$ be an abelian variety defined over a number field $F$. For a prime number $\ell$, we consider the field extension of $F$ generated by the $\ell$-powered torsion points of $A$. According to a conjecture made by Rasmussen and Tamagawa, if we require these fields to be both a pro-$\ell$ extension of $F(\mu_{\ell^{\infty}})$ and unramified away from $\ell$, examples are quite rare. Indeed, it is expected that for a fixed dimension and field of definition, there exists such an abelian variety for only a finite number of primes.

We prove a uniform version of the conjecture in the case where the abelian varieties are elliptic curves with complex multiplication. In addition, we provide explicit bounds in cases where the number field has degree less than or equal to 100.
\end{abstract}

\maketitle

\section{Introduction}

Galois representations are an indispensable tool for analyzing the structure of the absolute Galois group of a number field $F$. One example of considerable interest comes from the fact that $G_F \vcentcolon=\text{Gal}(\bar{F}/F)$ acts (up to inner automorphism) on the algebraic fundamental group of the projective line minus three points. More precisely, if we let $X=\mathbb{P}^1_{F} \setminus \{0,1,\infty\}$ and $\bar{X}=X \otimes_F \bar{F}$, we may relate $\pi_1(X)$ and $\pi_1(\bar{X})$ through the following exact sequence:
\[
1 \to \pi_1(\bar{X}) \to \pi_1(X) \to G_F \to 1.
\]

\noindent From this we deduce the natural representation
\[
\Phi \colon G_F \to \text{Out}(\pi_1(\bar{X})).
\]

One approach to studying this representation, championed by Ihara in the 1980s, is to fix a prime number $\ell$ and consider the corresponding representation involving the pro-$\ell$ fundamental group of $\bar{X}$. Since $\pi_1^{\ell}(\bar{X})$ is a characteristic quotient of $\pi_1(\bar{X})$, we may define $\Phi_{\ell} \colon G_F \to \text{Out}(\pi_1^{\ell}(\bar{X}))$ via the following commutative diagram:

\begin{center}
\begin{tikzpicture}[node distance=2cm]
 \node (G)  {$G_F$};
 \node (1) [right of=G, node distance = 2.4 cm] {$\text{Out}(\pi_1(\bar{X}))$};
 \node (2)  [below of=1] {$\text{Out}(\pi_1^{\ell}(\bar{X}))$};

 \draw[->] (G) edge node [auto] {$\Phi$} (1);
 \draw[->] (G) edge node [left] {$\Phi_{\ell}$} (2);
 \draw[->] (1) to node {} (2);
 
\end{tikzpicture}
\end{center}

\noindent The fixed field of the kernel of $\Phi_{\ell}$, which we will denote $\san(F, \ell)$, is a large subfield of $\bar{F}$ that as of yet is not entirely understood. We know from Anderson and Ihara's 1988 paper \cite{ihara} that $\san(F, \ell)$ is a pro-$\ell$ extension of $F(\mu_{\ell^{\infty}})$ unramified away from $\ell$, but it is still an open question, first posed by Ihara in the case where $F=\mathbb{Q}$, to determine whether  $\san(F, \ell)$ is the \emph{maximal} such extension.

Subfields arising from geometric objects have helped to shed light on the structure of  $\san(F, \ell)$. For example, by the work of Rasmussen, Papanikolas, and Tamagawa in \cite{ras04}, \cite{ras07}, and \cite{ras1}, we know that if $E$ is an elliptic curve defined over $\mathbb{Q}$ with complex multiplication by $\mathbb{Q}(\sqrt{-\ell})$ and good reduction away from $\ell$, the field $\mathbb{Q}(E[\ell^\infty])$ is a subfield of  $\san(\mathbb{Q}, \ell)$. However, a finiteness conjecture made by Rasmussen and Tamagawa in \cite{ras1} implies such examples arising from abelian varieties are quite rare. As this conjecture motivates our work, we pause to introduce some notation and formalize its statement.

Let  $\mathscr{A}(F,g,\ell)$ be the set of $F$-isomorphism classes of abelian varieties $A/F$ of dimension $g$ for which $F(A[\ell^\infty])$ is both a pro-$\ell$ extension of $F(\mu_{\ell^{\infty}})$ and unramified away from $\ell$. We will use $\mathscr{A}(F, g)$ to denote the disjoint union of the $\mathscr{A}(F,g,\ell)$ over the set of all primes $\ell$, i.e.,  $\mathscr{A}(F, g)\vcentcolon=\{([A], \ell): [A] \in \mathscr{A}(F,g,\ell) \}$. Then we may state the Rasmussen-Tamagawa finiteness conjecture as follows:
\vspace{1 mm}
\begin{conj}
Let $F$ be a number field and $g>0$. The set $\mathscr{A}(F, g)$ is finite.
\end{conj}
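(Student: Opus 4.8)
Since Conjecture~1 is open in full generality, I will sketch a strategy for the case highlighted in the abstract, elliptic curves with complex multiplication, where one can aim not merely for finiteness but for a bound on $\ell$ depending only on $d \vcentcolon= [F:\mathbb{Q}]$. Fix such an $E/F$ with CM by an order $\mathcal{O}$ in an imaginary quadratic field $K$, and suppose $([E],\ell) \in \mathscr{A}(F,1,\ell)$. My first step is to translate the two defining conditions on $F(E[\ell^\infty])$ into representation theory. After base change to $FK$ (of degree at most $2d$, which preserves both conditions since good reduction is unaffected), the image of $\bar\rho_{E,\ell}$ lands in a Cartan subgroup $C \subseteq \mathrm{GL}_2(\mathbb{F}_\ell)$, so $\bar\rho_{E,\ell}$ decomposes into conjugate isogeny characters $\bar\psi,\bar\psi^{c}$ with $\bar\psi\,\bar\psi^{c} = \chi_\ell$, the mod-$\ell$ cyclotomic character (Weil pairing). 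The pro-$\ell$ condition forces $\mathrm{Im}(\bar\rho_{E,\ell}) \cap \mathrm{SL}_2(\mathbb{F}_\ell)$ to be an $\ell$-group; since $C \cap \mathrm{SL}_2(\mathbb{F}_\ell)$ is cyclic of order $\ell \mp 1$, this intersection is trivial. Hence $\det = \chi_\ell$ is injective on the image, the image is cyclic, and therefore $\bar\psi = \chi_\ell^{\,k}$ for some integer $k$. The unramified-away-from-$\ell$ condition, via N\'eron--Ogg--Shafarevich, forces $E$ to have good reduction at every prime not dividing $\ell$.

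The heart of the argument is a congruence read off from Frobenius. For any prime $\mathfrak{P}$ of $FK$ with $\mathfrak{P} \nmid \ell$ (which is automatically a prime of good reduction), the theory of complex multiplication attaches to $\mathrm{Frob}_\mathfrak{P}$ an algebraic integer $\pi_\mathfrak{P} \in \Ok$ with $|\pi_\mathfrak{P}|^2 = N\mathfrak{P}$ (the Weil bound), while the shape $\bar\psi = \chi_\ell^{\,k}$ gives $\pi_\mathfrak{P} \equiv (N\mathfrak{P})^{k} \pmod{\lambda}$ for a prime $\lambda \mid \ell$. A local analysis at $\ell$ pins down the exponent: the Hodge--Tate weight of $\psi$ lies in $\{0,1\}$, so $\bar\psi|_{I_\ell}$ is $\chi_\ell^{\,w}|_{I_\ell}$ with $w \in \{0,1\}$, and as $\chi_\ell$ has order $\ell-1$ on inertia this forces $k \in \{0,1\}$ once $\ell > 2$. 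Consequently $\ell$ divides the nonzero integer $N_{K/\mathbb{Q}}\!\left(\pi_\mathfrak{P} - (N\mathfrak{P})^{k}\right)$, whose absolute value is at most $\big(\sqrt{N\mathfrak{P}} + (N\mathfrak{P})^{k}\big)^2$. (Nonvanishing holds because $|\pi_\mathfrak{P}| = \sqrt{N\mathfrak{P}}$ prevents $\pi_\mathfrak{P}$ from equalling the real number $(N\mathfrak{P})^{k}$.) Taking $\mathfrak{P}$ of small norm therefore bounds $\ell$ outright.

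To make the bound \emph{uniform} I control the remaining parameters by the degree $d$ alone. First, since $j(E) \in F$ we have $K(j(E)) = H_{\mathcal{O}}$, the ring class field, so $h(\mathcal{O}) = [K(j(E)):K] \le d$; an effective lower bound for class numbers (Goldfeld--Gross--Zagier, or for $d \le 100$ the complete tables of Watkins) bounds $|\mathrm{disc}\,\mathcal{O}|$ in terms of $d$, leaving only finitely many CM orders. With good reduction outside $\ell$ in hand, the smallest rational prime $p \ne \ell$ that splits in $K$ and is unramified provides a degree-one $\mathfrak{P}$ with $N\mathfrak{P} = p$; effective Chebotarev bounds this $p$ in terms of $|\mathrm{disc}\,K|$, hence in terms of $d$. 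Feeding $N\mathfrak{P} = p$ into the congruence yields $\ell \le C(d)$. Finally, for each of the finitely many pairs (order, $\ell$) the number of curves is finite by the Shafarevich-type finiteness of elliptic curves over $F$ with good reduction outside $\ell$, completing the uniform finiteness statement in the CM case.

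The main obstacle is the passage from finiteness to uniformity: the representation-theoretic reduction and the Frobenius congruence are robust, but every quantitative input must be effective and independent of $F$ beyond its degree. The crucial and most delicate ingredient is the effective class-number lower bound, which must not rely on GRH; this is what rules out infinitely many CM fields (note that the genuine examples with CM by $\mathbb{Q}(\sqrt{-\ell})$ survive precisely because $h\le d$ holds for only finitely many such $\ell$). A second effective input is the production of a split good Frobenius of controlled norm. The remaining points---the exponent computation at $\ell$, the prime $\ell = 2$, any ramification introduced by $FK/F$, and the nonvanishing of $\pi_\mathfrak{P} - (N\mathfrak{P})^{k}$---are technical but affect only a finite exceptional set and do not threaten the uniform bound.
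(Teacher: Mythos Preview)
Your strategy is sound in outline but takes a genuinely different route from the paper's. The paper never invokes Frobenius eigenvalues, Hodge--Tate weights, or effective Chebotarev. Instead, after reducing to CM by the maximal order via an $F$-rational isogeny of degree prime to~$\ell$, it argues directly from ray class field theory: since $K\subset F(E[\ell])$, the field $F(E[\ell])$ contains the ray class field $K(j_E,\mathfrak{h}(E[\ell]))$ of modulus $\ell\Ok$, whose degree over $K$ is $h_{\mathfrak{m}}\ge\frac{1}{w_K}(\ell-1)^2$ whenever $\ell\nmid d_K$. Comparing with $[F(\mu_\ell):\mathbb{Q}]\le n(\ell-1)$ immediately forces $\ell\le\frac{w_K}{2}n+1\le 3n+1$ in the unramified case; the ramified case $\ell\mid d_K$ is then handled, exactly as you do, via $h_K\le n$ and Heilbronn. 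This is more elementary than your argument and yields sharp explicit constants (e.g.\ $C(1)=C(2)=163$). It also proves something strictly stronger: the paper's Theorem~1 uses only the pro-$\ell$ condition and \emph{not} the unramified-away-from-$\ell$ hypothesis, whereas your Frobenius method needs the latter to supply a good-reduction prime of small norm.

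Two points in your sketch need tightening. First, ``$p$ splits in $K$'' does not produce a degree-one prime of $FK$; you only get $N\mathfrak{P}\le p^{[FK:K]}\le p^{n}$, so your bound on $\ell$ is exponential in $n$ rather than linear, and in fact you could skip Chebotarev entirely by taking $p\in\{2,3\}$ once good reduction outside $\ell$ is in hand. Second, your claim that $\chi_\ell$ has order $\ell-1$ on inertia is true for $G_{\mathbb{Q}_\ell}$ but not in general for the decomposition group at a prime of $FK$ above $\ell$; the Hodge--Tate argument therefore only determines $k$ modulo $(\ell-1)/e$ for some $e\le 2n$. This is repairable (only boundedly many values of $k$ survive), but as written the step ``$k\in\{0,1\}$'' is not justified.
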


\noindent This implies that, for a fixed $F$ and $g$, there exists a constant $C$ for which $\mathscr{A}(F,g,\ell)=\varnothing$ when $\ell>C$. One may ask whether stronger behavior should be expected for the bound $C$, and indeed the following uniform (meaning uniform in the degree of $F/\mathbb{Q}$) conjecture appears in \cite[Conj. 2]{ras2}:
\vspace{1 mm}
\begin{conj}
Let $F$ be a number field and $g>0$. There exists a constant $C$ depending only on $g$ and the degree of $F/ \mathbb{Q}$ for which $\mathscr{A}(F,g,\ell)=\varnothing$ when $\ell>C$.
\end{conj}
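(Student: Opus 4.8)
The plan is to establish the conjecture in the case of CM elliptic curves, i.e.\ for $g = 1$ restricted to $E/F$ admitting complex multiplication; a full proof in arbitrary dimension is out of reach, but the CM hypothesis supplies enough rigidity to extract a bound depending only on $d := [F:\mathbb{Q}]$. First I would reduce to the situation where the CM field $K$ is contained in $F$. Replacing $F$ by $FK$ at most doubles the degree, so a bound $C(2d)$ in the split-multiplication case yields a bound $C(d)$ in general; one must check that the two defining conditions of $\mathscr{A}(F,1,\ell)$ behave predictably under this base change, which is routine since $[K:\mathbb{Q}]=2$ is prime to $\ell$ for $\ell>2$. With $K\subseteq F$, the main theorem of complex multiplication identifies the $\ell$-adic representation $\rho_\ell\colon G_F \to \mathrm{Aut}(T_\ell E)$ with a character valued in the Cartan subgroup $(\mathcal{O}_K\otimes\mathbb{Z}_\ell)^\times$, and $\det\rho_\ell$ agrees with the cyclotomic character up to finite order. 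I would then phrase everything through the associated Hecke character $\psi$.

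The crux is a local analysis at $\ell$ that translates the pro-$\ell$ condition into an arithmetic constraint on the splitting of $\ell$ in $K$. Since $\mathrm{Gal}(F(E[\ell^\infty])/F(\mu_{\ell^\infty}))$ is the norm-one part of the image, the extension is pro-$\ell$ exactly when the prime-to-$\ell$ part of that subgroup is trivial; at the level of mod-$\mathfrak{l}$ representations this forces $\bar\psi = \bar\chi^{\,k}$ for some $k$, i.e.\ the mod-$\mathfrak{l}$ representation is cyclotomic up to a pro-$\ell$ twist. Comparing $\bar\psi$ with its conjugate $\bar\psi^\sigma = \bar\chi^{\,1-k}$, and using that the cyclotomic character is invariant under $\mathrm{Gal}(K/\mathbb{Q})$, one finds $\bar\psi = \bar\psi^\sigma$. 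Examining the restriction to inertia at $\ell$ via the fundamental characters of Serre—together with the good-reduction-outside-$\ell$ hypothesis, which controls the reduction type at $\ell$—rules out the inert case (level-$2$ fundamental characters cannot equal a power of the level-$1$ cyclotomic character) and the split case (there $\bar\psi$ and $\bar\psi^\sigma$ have distinct ramification, contradicting their equality). Hence $\ell$ must ramify in $K$, so $\ell \mid \mathrm{disc}(K)$ and in particular $|\mathrm{disc}(K)| \ge \ell$.

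The uniform bound now follows from class-number growth. Because $j(E)\in F$, the ring class field $K(j(E))$ lies in $F$, so its degree over $K$—the order class number $h(\mathcal{O})$—satisfies $h(\mathcal{O}) \le [F:K] \le d/2$. On the other hand $h(\mathcal{O})$ is bounded below by a constant multiple of $h_K$, and the analytic class number formula gives an effective lower bound $h_K \gg |\mathrm{disc}(K)|^{1/2}/\log|\mathrm{disc}(K)| \ge \sqrt{\ell}/\log\ell$. Combining these yields $\sqrt{\ell}/\log\ell \ll d$, which bounds $\ell$ purely in terms of $d$ and establishes the conjecture for CM elliptic curves. For the explicit bounds when $d \le 100$, I would instead invoke the complete classification of imaginary quadratic orders of class number at most $50$ (Watkins): since $h(\mathcal{O}) \le d/2 \le 50$, there are only finitely many admissible $K$, whose discriminants are known, and the largest ramified prime occurring among them supplies the explicit constant.

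The main obstacle is the local analysis of the second paragraph: extracting the clean dichotomy ``pro-$\ell$ $\Rightarrow$ $\ell$ ramifies in $K$'' requires a careful, uniform treatment of $\bar\psi|_{I_\ell}$ across all reduction types at $\ell$ (ordinary versus supersingular, and the additive case that the CM hypothesis does not immediately exclude), since the hypothesis only guarantees good reduction \emph{outside} $\ell$. A secondary difficulty is effectivity: the non-explicit uniform statement needs an effective lower bound for $h_K$, so one must rely on Goldfeld--Gross--Zagier rather than Siegel's theorem, and track constants honestly if the final bound is to be made fully explicit.
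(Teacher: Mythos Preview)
Your overall architecture agrees with the paper's: both reduce to showing that the pro-$\ell$ hypothesis forces $\ell \mid d_K$, and then finish by observing that $j_E \in F$ bounds the class number of $K$ in terms of $n=[F:\mathbb{Q}]$, so only finitely many $K$ (hence finitely many ramified primes) can occur. The divergence is entirely in how one reaches $\ell \mid d_K$.

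The paper's argument is considerably more elementary than your Hecke-character route and sidesteps exactly the obstacle you flag. It never base-changes to $FK$, never invokes good reduction, and never analyses inertia at $\ell$. Instead it uses two global degree facts. First, the standard divisibility constraints on $[F(E[\ell]):F]$ for CM curves in the split and inert cases show that if $\ell \nmid d_K$ then $\ell \nmid [F(E[\ell]):F]$; combined with the pro-$\ell$ hypothesis this forces $F(E[\ell]) = F(\mu_\ell)$ outright. Second, the Weber-function description of ray class fields gives $K(j_E,\mathfrak{h}(E[\ell])) \subseteq F(E[\ell])$, and the ray class number formula yields $[K(j_E,\mathfrak{h}(E[\ell])):K] \geq (\ell-1)^2/w_K$ when $\ell \nmid d_K$. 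Comparing this with $[F(\mu_\ell):\mathbb{Q}] \leq n(\ell-1)$ gives a contradiction once $\ell > \tfrac{w_K}{2}n+1$. Passage from the maximal order to an arbitrary order is handled by a separate class-number estimate showing $\gcd(f,\ell)=1$, so that $F(E[\ell])$ is unchanged.

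Your path via $\bar\psi = \bar\psi^\sigma$ and Serre's fundamental characters could plausibly be completed, but as you acknowledge, it hinges on controlling the inertial action at $\ell$ with no reduction hypothesis there; this is a genuine gap as written. The paper's ray-class-field comparison avoids any local analysis and even proves a stronger statement (Theorem~1) that drops the good-reduction requirement entirely. One minor slip: after your base change the correct bound is $h(\mathcal{O}) \leq d$, not $d/2$; Watkins's tables extend to class number $100$, so the explicit bounds for $n\leq 100$ are still available.
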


In \cite{ras1}, Rasmussen and Tamagawa prove Conjecture 1 in the cases where $g=1$ and $[F:\mathbb{Q}]=1$ or $2$, excluding the 9 imaginary quadratic fields of class number one. In addition, they find a complete list of $\mathbb{Q}$-isogeny classes in $\mathscr{A}(\mathbb{Q}, 1)$, showing that $\mathscr{A}(\mathbb{Q},1,\ell)$ is empty if $\ell > 163$. Later work by Ozeki in \cite{ozeki} proves Conjecture 1 for abelian varieties with complex multiplication defined over a number field containing the CM field, and Arai has explored the conjecture in the context of QM-abelian surfaces (see \cite{arai}).

More recently in \cite{ras2}, Rasmussen and Tamagawa prove that the Generalized Riemann Hypothesis implies Conjecture 1, and they give unconditional proofs in several new cases where the degree of $F$ is restricted and $g \leq 3$. (In particular, they resolve the conjecture in the case where $F$ is an imaginary quadratic field of class number one and $g=1$.) In addition, they prove a slightly stronger version of Conjecture 2 under the Generalized Riemann Hypothesis for any $g$ and any $F$ of odd degree.

Despite this progress, Conjecture 2 is known unconditionally only in the case where $g=1$ and $[F:\mathbb{Q}]=1$ or $3$ (see \cite{ras1}, \cite{ras2}). Moreover, aside from the case when $F=\mathbb{Q}$, any known bounds are likely far from optimal. In this article, we prove a result stronger than Conjecture 2 for elliptic curves with complex multiplication, and we give improved bounds for number fields of degree $1 < n \leq 100$. Specifically, we have the following theorem:

\begin{thm1}
Let $F$ be a number field with $[F:\mathbb{Q}]=n$. There exists a constant $C=C(n)$ depending only on $n$ with the following property: If there exists a CM elliptic curve $E/F$ with $F(E[\ell^\infty])$ a pro-$\ell$ extension of $F(\mu_{\ell})$ for some rational prime $\ell$, then $\ell \leq C$.
\end{thm1}

We record the consequences of this theorem for Conjectures 1 and 2. Let $\mathscr{A}^{\mathrm{CM}}(F,g,\ell)$ be the subset of $\mathscr{A}(F,g,\ell)$ consisting of abelian varieties with complex multiplication, and define $\mathscr{A}^{\mathrm{CM}}(F,g)\vcentcolon=\{([A], \ell): [A] \in \mathscr{A}^{\mathrm{CM}}(F,g,\ell) \}$. Then as a direct consequence of Theorem 1, we have the following corollaries:

\begin{cor}
Let $F$ be a number field with $[F:\mathbb{Q}]=n$. There exists a constant $C=C(n)$ depending only on $n$ such that $\mathscr{A}^{\mathrm{CM}}(F,1,\ell) = \varnothing$ if $\ell > C$.
\end{cor}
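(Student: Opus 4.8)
The plan is to deduce the corollary directly from Theorem 1, the only task being to verify that membership in $\mathscr{A}^{\mathrm{CM}}(F,1,\ell)$ forces the hypothesis of that theorem. Fix a prime $\ell$ and suppose $[E]\in\mathscr{A}^{\mathrm{CM}}(F,1,\ell)$, so that $E/F$ is a CM elliptic curve for which $F(E[\ell^\infty])$ is a pro-$\ell$ extension of $F(\mu_{\ell^\infty})$ (and is unramified away from $\ell$, a condition we will not even need). Writing $L=F(E[\ell^\infty])$, $M=F(\mu_{\ell^\infty})$, and $K=F(\mu_\ell)$, the Weil pairing embeds $\mu_{\ell^n}$ into $F(E[\ell^n])$ for every $n$, yielding the tower of Galois extensions $K\subseteq M\subseteq L$. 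The goal is to show $L/K$ is pro-$\ell$, which is exactly the hypothesis of Theorem 1.

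First I would record the cyclotomic step: $M/K$ is a pro-$\ell$ extension. Indeed, the restriction of the $\ell$-adic cyclotomic character identifies $\mathrm{Gal}(M/F)$ with a subgroup of $\mathbb{Z}_\ell^\times$, under which $\mathrm{Gal}(M/K)$ corresponds to a closed subgroup of the kernel of the reduction map $\mathbb{Z}_\ell^\times \to (\mathbb{Z}/\ell)^\times$, namely $1+\ell\mathbb{Z}_\ell$. Since $1+\ell\mathbb{Z}_\ell$ is a pro-$\ell$ group (isomorphic to $\mathbb{Z}_\ell$ when $\ell$ is odd) and a closed subgroup of a pro-$\ell$ group is again pro-$\ell$, the extension $M/K$ is pro-$\ell$.

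Next I would combine this with the hypothesis via the short exact sequence of profinite groups
\[
1 \longrightarrow \mathrm{Gal}(L/M) \longrightarrow \mathrm{Gal}(L/K) \longrightarrow \mathrm{Gal}(M/K) \longrightarrow 1.
\]
By assumption $\mathrm{Gal}(L/M)$ is pro-$\ell$, and by the previous step $\mathrm{Gal}(M/K)$ is pro-$\ell$; since an extension of a pro-$\ell$ group by a pro-$\ell$ group is pro-$\ell$, it follows that $\mathrm{Gal}(L/K)$ is pro-$\ell$, i.e. $F(E[\ell^\infty])$ is a pro-$\ell$ extension of $F(\mu_\ell)$. Theorem 1 then gives $\ell \le C(n)$ for the constant $C(n)$ produced there, so taking the same constant $C=C(n)$ yields $\mathscr{A}^{\mathrm{CM}}(F,1,\ell)=\varnothing$ whenever $\ell>C$.

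Since all of the arithmetic content is already carried by Theorem 1, there is no genuine obstacle; the only point requiring care is the purely group-theoretic observation that the two pro-$\ell$ layers $L/M$ and $M/K$ splice into a single pro-$\ell$ layer $L/K$. It is worth emphasizing that this deduction discards the ``unramified away from $\ell$'' requirement in the definition of $\mathscr{A}^{\mathrm{CM}}(F,1,\ell)$ and weakens the base field from $F(\mu_{\ell^\infty})$ to $F(\mu_\ell)$, so that Theorem 1 is strictly stronger than what the corollary demands.
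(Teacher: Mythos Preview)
Your proposal is correct and follows exactly the route the paper intends: the paper states this corollary as ``a direct consequence of Theorem 1'' with no further argument, and you have simply supplied the routine verification that the defining conditions of $\mathscr{A}^{\mathrm{CM}}(F,1,\ell)$ imply the hypothesis of Theorem 1. Your splicing of the two pro-$\ell$ layers $F(E[\ell^\infty])/F(\mu_{\ell^\infty})$ and $F(\mu_{\ell^\infty})/F(\mu_\ell)$ is the expected justification, and your closing remark that the ramification condition is unused matches the paper's own commentary after Theorem 1.
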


\begin{cor}
$\mathscr{A}^{\mathrm{CM}}(F,1)$ is finite.
\end{cor}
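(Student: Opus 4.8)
The plan is to reduce Corollary 2 to two finiteness inputs: the uniform bound on $\ell$ coming from Corollary 1, and the fact that for each fixed prime $\ell$ only finitely many elliptic curves can occur. By definition $\mathscr{A}^{\mathrm{CM}}(F,1)$ is the disjoint union of the sets $\mathscr{A}^{\mathrm{CM}}(F,1,\ell)$ as $\ell$ ranges over all rational primes. Corollary 1 supplies a constant $C=C(n)$ with $\mathscr{A}^{\mathrm{CM}}(F,1,\ell)=\varnothing$ whenever $\ell>C$, so the union is concentrated on the finitely many primes $\ell\le C$. It therefore suffices to prove that $\mathscr{A}^{\mathrm{CM}}(F,1,\ell)$ is finite for each individual prime $\ell$, after which Corollary 2 follows since a finite union of finite sets is finite.

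Fix a prime $\ell$ and take $[E]\in\mathscr{A}^{\mathrm{CM}}(F,1,\ell)\subseteq\mathscr{A}(F,1,\ell)$. By definition of $\mathscr{A}(F,1,\ell)$, the extension $F(E[\ell^\infty])/F$ is unramified away from $\ell$. I would use this to pin down the reduction type of $E$: for any finite place $v$ of $F$ not lying over $\ell$, the inertia subgroup $I_v$ acts trivially on $E[\ell^\infty]$, hence on the $\ell$-adic Tate module $T_\ell E$, so by the N\'eron--Ogg--Shafarevich criterion $E$ has good reduction at $v$. Thus every curve counted by $\mathscr{A}(F,1,\ell)$ has good reduction outside the finite set $S=\{v:v\mid\ell\}$ of places of $F$.

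The remaining step is to invoke Shafarevich's theorem: there are only finitely many $F$-isomorphism classes of elliptic curves over $F$ with good reduction outside a fixed finite set of places. This immediately bounds $\mathscr{A}(F,1,\ell)$, and a fortiori its subset $\mathscr{A}^{\mathrm{CM}}(F,1,\ell)$, for each $\ell$. Combining with the first paragraph completes the argument.

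I expect no serious obstacle here: the essential difficulty of Corollary 2 is entirely absorbed into Theorem 1 (equivalently Corollary 1), which furnishes the uniformity in $\ell$. The only external ingredient is Shafarevich's finiteness theorem, and the deduction of good reduction from the unramifiedness hypothesis is a routine application of N\'eron--Ogg--Shafarevich. If one preferred to avoid Shafarevich, the complex multiplication hypothesis offers an alternative route---a CM elliptic curve over the fixed field $F$ has $j$-invariant lying in a finite set, and imposing good reduction outside $S$ leaves only finitely many admissible twists---but Shafarevich's theorem is cleaner and entirely sufficient.
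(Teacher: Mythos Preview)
Your argument is correct and matches the paper's intended approach. The paper does not spell out a proof of Corollary 2, merely declaring it (together with Corollary 1) a ``direct consequence of Theorem 1,'' but the subsequent remark that ``without the ramification requirement, we cannot guarantee (nor should we expect) a finiteness result as in Corollary 2'' confirms that the author has in mind precisely the mechanism you supply: the unramifiedness condition forces good reduction outside $\ell$ via N\'eron--Ogg--Shafarevich, and then Shafarevich's theorem bounds each $\mathscr{A}(F,1,\ell)$ individually, so that Corollary 1's bound on $\ell$ finishes the job.
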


Note that the bound of Theorem 1 is achieved even as we relax the ramification requirement, thereby allowing the inclusion of elliptic curves with bad reduction at primes other than $\ell$. However, without the ramification requirement, we cannot guarantee (nor should we expect) a finiteness result as in Corollary 2.

A discussion of computed bounds is included at the end of section 3. 

\subsection*{Notation}
\begin{itemize}

\item $\mu_{\ell}$ denotes the group of $\ell$th roots of unity in $\bar{\mathbb{Q}}$, and $\mu_{\ell^{\infty}}= \cup_{n\geq 1}\mu_{\ell^n}$.
\item For an abelian variety $A$ defined over a field $F$, we denote the extension of $F$ generated by the $\ell$-torsion points of $A$ by $F(A[\ell])$. The field $F(A[\ell^\infty])$ is generated over $F$ by the $\ell$-powered torsion points of $A$.
\item If $F$ is a number field, $d_F$ is the absolute discriminant of $F/ \mathbb{Q}$. We denote the ring of integers of $F$ by $\mathcal{O}_F$, and $\mathcal{O}_F^{\times}$ is its group of units. 
\item $w_F$ denotes the number of distinct roots of unity in $F$, i.e., $|\mathcal{O}_F^{\times}|= w_F$.
\item If $\mathfrak{a}$ is an integral ideal in the number field $F$, we denote the norm of $\mathfrak{a}$ by $\mathcal{N}(\mathfrak{a})$. In other words, $\mathcal{N}(\mathfrak{a})= [\mathcal{O}_F:\mathfrak{a}]$.
\item $ \left( \frac{a}{\ell} \right)$ is the Kronecker symbol.

\end{itemize}

\section{Background on Ray Class Fields and Elliptic Curves}

We first recall the definition of the $\mathfrak{m}$-ray class group. Though the theory exists in more generality, here we restrict our attention to the case where $K$ is an imaginary quadratic field so we may take $\mathfrak{m}$ to be an integral ideal of $\Ok$. Then relative to $\mathfrak{m}= \prod \mathfrak{p}^{m(\mathfrak{p})}$, we define the following two subsets:\\

\hspace{1 cm} $I_K(\mathfrak{m})=$ the set of all fractional ideals of $K$ relatively prime to $\mathfrak{m},$\\

\hspace{1 cm} $P_K(\mathfrak{m}) = \{ \left(\alpha \right): \alpha \in K^{\times}, \ord_{\mathfrak{p}}(\alpha -1) \geq m(\mathfrak{p}) \text{ for all } \mathfrak{p} \text{ dividing } \mathfrak{m} \}.$\\

\noindent Note $P_K(\mathfrak{m})$ is a subgroup of $I_K(\mathfrak{m})$, and the quotient $I_K(\mathfrak{m})/P_K(\mathfrak{m})$ is the $\mathfrak{m}$-ray class group of $K$. As with the ideal class group, whose definition we recover when $\mathfrak{m}=1$, the $\mathfrak{m}$-ray class group is finite. In fact, we have the following explicit formula for its cardinality:

\begin{prop} Let $\mathfrak{m}$ be an integral ideal in a number field $K$. The order of the ray class group modulo $\mathfrak{m}$ is given by:
\[
h_{\mathfrak{m}}=h_K \cdot [U:U_{\mathfrak{m}}]^{-1} \cdot \mathcal{N}(\mathfrak{m}) \cdot \prod_{\mathfrak{p} \mid \mathfrak{m}} \left( 1- \mathcal{N}(\mathfrak{p})^{-1} \right)
\]
where
\begin{align*}
h_K &= \text{class number of } K\\
U &= \Ok^{\times}\\
U_{\mathfrak{m}} &= \{ \alpha \in U :  \ord_{\mathfrak{p}}(\alpha -1) \geq m(\mathfrak{p}) \text{ for all } \mathfrak{p} \text{ dividing } \mathfrak{m} \}.
\end{align*}

\end{prop}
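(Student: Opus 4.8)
The plan is to realize $h_{\mathfrak{m}} = |I_K(\mathfrak{m})/P_K(\mathfrak{m})|$ as the ordinary class number multiplied by a local factor, by interpolating the ray class group between the ideal class group and unit-group data through two short exact sequences. First I would observe that the inclusion $I_K(\mathfrak{m}) \hookrightarrow I_K$ induces a surjection onto the ideal class group $\mathrm{Cl}(K) = I_K/P_K$: every ideal class contains a representative prime to $\mathfrak{m}$ by approximation, so the composite $I_K(\mathfrak{m}) \to I_K \to \mathrm{Cl}(K)$ is onto, and its kernel is the group of principal ideals prime to $\mathfrak{m}$, namely $P_K \cap I_K(\mathfrak{m})$. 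Since $P_K(\mathfrak{m}) \subseteq P_K \cap I_K(\mathfrak{m})$, this yields the exact sequence
\[
1 \to (P_K \cap I_K(\mathfrak{m}))/P_K(\mathfrak{m}) \to I_K(\mathfrak{m})/P_K(\mathfrak{m}) \to \mathrm{Cl}(K) \to 1,
\]
so that $h_{\mathfrak{m}} = h_K \cdot |(P_K \cap I_K(\mathfrak{m}))/P_K(\mathfrak{m})|$, reducing the problem to the remaining factor.

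Next I would identify that factor with a quotient of $(\Ok/\mathfrak{m})^{\times}$. Let $K^{\times}(\mathfrak{m})$ denote the elements of $K^{\times}$ prime to $\mathfrak{m}$, and set $K_{\mathfrak{m},1} = \{\alpha \in K^{\times} : \ord_{\mathfrak{p}}(\alpha - 1) \geq m(\mathfrak{p}) \text{ for all } \mathfrak{p} \mid \mathfrak{m}\}$. The assignment $\alpha \mapsto (\alpha)$ gives a surjection $K^{\times}(\mathfrak{m}) \to (P_K \cap I_K(\mathfrak{m}))/P_K(\mathfrak{m})$ whose kernel is exactly $U \cdot K_{\mathfrak{m},1}$, since an ideal $(\alpha)$ lies in $P_K(\mathfrak{m})$ precisely when $\alpha$ differs from an element of $K_{\mathfrak{m},1}$ by a unit. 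On the other hand, reduction modulo $\mathfrak{m}$ gives a surjection $K^{\times}(\mathfrak{m}) \to (\Ok/\mathfrak{m})^{\times}$ with kernel $K_{\mathfrak{m},1}$. Combining these I would conclude
\[
(P_K \cap I_K(\mathfrak{m}))/P_K(\mathfrak{m}) \cong (\Ok/\mathfrak{m})^{\times} \big/ \mathrm{im}(U),
\]
where $\mathrm{im}(U)$ is the image of the units under reduction; this image has order $[U:U_{\mathfrak{m}}]$ because $U \cap K_{\mathfrak{m},1} = U_{\mathfrak{m}}$. Hence the factor equals $|(\Ok/\mathfrak{m})^{\times}| \cdot [U:U_{\mathfrak{m}}]^{-1}$.

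Finally, I would compute $|(\Ok/\mathfrak{m})^{\times}|$ via the decomposition $\Ok/\mathfrak{m} \cong \prod_{\mathfrak{p} \mid \mathfrak{m}} \Ok/\mathfrak{p}^{m(\mathfrak{p})}$ furnished by the Chinese Remainder Theorem, together with the standard count $|(\Ok/\mathfrak{p}^k)^{\times}| = \mathcal{N}(\mathfrak{p})^k (1 - \mathcal{N}(\mathfrak{p})^{-1})$. Multiplying these local factors yields $\mathcal{N}(\mathfrak{m}) \prod_{\mathfrak{p} \mid \mathfrak{m}}(1 - \mathcal{N}(\mathfrak{p})^{-1})$, using $\mathcal{N}(\mathfrak{m}) = \prod_{\mathfrak{p}} \mathcal{N}(\mathfrak{p})^{m(\mathfrak{p})}$, and assembling the three steps produces the claimed formula. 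The main obstacle is careful bookkeeping rather than conceptual depth: one must verify that each surjection above has precisely the stated kernel, and justify the surjectivity of both the reduction and class-group maps by approximation. A convenient simplification is that $K$ is imaginary quadratic, so there are no real archimedean places and hence no sign conditions to track, meaning the narrow and wide ray class groups coincide and the unit factor $[U:U_{\mathfrak{m}}]$ captures the full contribution of $\Ok^{\times}$.
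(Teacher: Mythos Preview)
Your argument is correct and is the standard proof of this formula via the two short exact sequences relating the ray class group, the ordinary class group, and $(\Ok/\mathfrak{m})^{\times}/\mathrm{im}(U)$. The paper, however, does not prove this proposition at all: it simply cites Corollary~3.2.4 of Cohen's \emph{Advanced Topics in Computational Number Theory} and remarks that the modulus has been restricted to an integral ideal. So you are supplying strictly more than the paper does, and what you supply is essentially the argument one would find behind that citation. One small remark: your closing comment that $K$ being imaginary quadratic eliminates archimedean sign conditions is accurate but slightly tangential, since the proposition as stated already takes $\mathfrak{m}$ to be an integral ideal with no infinite part, and your exact-sequence argument goes through verbatim for any number field under that hypothesis.
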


\begin{proof}
See Corollary 3.2.4 in \cite{cohen2}. Note we have restricted to the case where our modulus is an integral ideal.
\end{proof}

\noindent In the special case $\mathfrak{m}=\ell \Ok$, we obtain:

\begin{cor}
Let $\ell$ be a prime and $\mathfrak{m}$ be the modulus $\ell \Ok$ in a quadratic field $K$. Then:

\begin{enumerate}
\item If $\ell$ is ramified in $K$, $h_{\mathfrak{m}}=h_K \cdot [U:U_{\mathfrak{m}}]^{-1} \cdot \ell \cdot (\ell -1).$
\item If $\ell$ splits in $K$, $h_{\mathfrak{m}}=h_K \cdot [U:U_{\mathfrak{m}}]^{-1} \cdot (\ell -1) \cdot (\ell -1).$
\item If $\ell$ is inert in $K$, $h_{\mathfrak{m}}=h_K \cdot [U:U_{\mathfrak{m}}]^{-1} \cdot (\ell +1) \cdot (\ell -1).$
\end{enumerate}
\end{cor}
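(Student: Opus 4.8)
The plan is to apply Proposition 1 directly with the modulus $\mathfrak{m}=\ell\Ok$, observing that the factors $h_K$ and $[U:U_{\mathfrak{m}}]^{-1}$ are common to all three cases and simply carry over from the general formula. The entire content of the corollary therefore lies in evaluating the two $\mathfrak{m}$-dependent quantities $\mathcal{N}(\mathfrak{m})$ and $\prod_{\mathfrak{p}\mid\mathfrak{m}}\left(1-\mathcal{N}(\mathfrak{p})^{-1}\right)$ according to the decomposition of $\ell$ in $K$. Since $[K:\mathbb{Q}]=2$, the fundamental identity $\sum_i e_i f_i = 2$ forces exactly the three possibilities enumerated in the statement (ramified, split, inert), so the case analysis is exhaustive.

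First I would treat the \emph{ramified} case, where $\ell\Ok=\mathfrak{p}^2$ with $\mathcal{N}(\mathfrak{p})=\ell$. Then $\mathcal{N}(\mathfrak{m})=\ell^2$, and the product runs over the single prime $\mathfrak{p}$, giving $\left(1-\ell^{-1}\right)$. Multiplying yields $\ell^2\left(1-\ell^{-1}\right)=\ell(\ell-1)$, which is part (1). Next, in the \emph{split} case $\ell\Ok=\mathfrak{p}_1\mathfrak{p}_2$ with $\mathfrak{p}_1\neq\mathfrak{p}_2$ and $\mathcal{N}(\mathfrak{p}_i)=\ell$, so again $\mathcal{N}(\mathfrak{m})=\ell^2$, but now the product has two factors $\left(1-\ell^{-1}\right)^2$. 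This gives $\ell^2\left(1-\ell^{-1}\right)^2=(\ell-1)^2$, which is part (2).

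The one spot warranting a little care is the \emph{inert} case, where $\ell\Ok=\mathfrak{p}$ is itself prime with residue degree $2$, so that $\mathcal{N}(\mathfrak{p})=\ell^2$ rather than $\ell$. Here $\mathcal{N}(\mathfrak{m})=\ell^2$ as well, but the single Euler factor is $\left(1-\ell^{-2}\right)$, so the product of the two $\mathfrak{m}$-dependent terms is $\ell^2\left(1-\ell^{-2}\right)=\ell^2-1=(\ell+1)(\ell-1)$, establishing part (3). I do not anticipate a genuine obstacle, as this is a routine specialization of Proposition 1; the only potential pitfall is conflating the residue degree of the inert prime with that of the split or ramified primes, which would corrupt the norm computation in that case. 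Once the three norm-and-product evaluations are in hand, substituting each back into the formula of Proposition 1 completes the proof.
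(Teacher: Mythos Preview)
Your proposal is correct and is exactly the intended derivation: the paper presents this corollary without proof, as an immediate specialization of Proposition~1 to $\mathfrak{m}=\ell\Ok$, and your case-by-case evaluation of $\mathcal{N}(\mathfrak{m})\prod_{\mathfrak{p}\mid\mathfrak{m}}(1-\mathcal{N}(\mathfrak{p})^{-1})$ according to the splitting type of $\ell$ is the only thing there is to do.
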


Just as the ideal class group gives the Galois group of the Hilbert class field, in general the $\mathfrak{m}$-ray class group gives the Galois group of an abelian extension of $K$ called the ray class field of $K$ with modulus $\mathfrak{m}$, denoted by $K_{\mathfrak{m}}$. Ramification in the extension $K_{\mathfrak{m}}/K$ is restricted to primes dividing $\mathfrak{m}$, and the primes that split completely are precisely the primes in $P_K(\mathfrak{m})$. The fact that a unique $K_{\mathfrak{m}}$ exists for any ideal $\mathfrak{m}$ of $\Ok$ is a consequence of the Existence Theorem of Class Field Theory, and we direct the interested reader to Chapter 5 of \cite{janusz} for details.

We can construct the ray class fields of an imaginary quadratic field $K$ using torsion points of elliptic curves possessing complex multiplication. Recall that if $E/F$ is an elliptic curve, we say $E$ has complex multiplication, or CM, if its ring of $\bar{F}$-endomorphisms is strictly larger than $\mathbb{Z}$. In this case, $\text{End} (E)\otimes \mathbb{Q}$ is isomorphic to an imaginary quadratic field $K$, and $\text{End} (E)$ is isomorphic to an order in that field. If $E$ has CM by the maximal order in $K$, then the ray class field of $K$ with modulus $N\Ok$ can be generated from the $N$-torsion points of $E$, as we will now explain.

Since char($F) \neq 2 \text{ or } 3$, $E$ is isomorphic over $F$ to a curve having an equation of the form $y^2=4x^3-g_2x-g_3$, with $g_2, g_3 \in F$. The Weber function $\mathfrak{h}$ on $E$ is defined as follows, where $j_E$ is the $j$-invariant of $E$ and $\Delta=g_2^3-27g_3^2$:
\[
  \mathfrak{h}(x,y) = \left\{
  \begin{array}{l l}
    \vspace{3 mm}
    \dfrac{g_2g_3}{\Delta}x \text{ if } j_E \neq 0, 1728,\\
    \vspace{3 mm}
    \dfrac{g_2^2}{\Delta}x^2 \text{ if } j_E=1728,\\
    \dfrac{g_3}{\Delta}x^3 \text{ if } j_E=0.\\
  \end{array} \right.
\]

\noindent We then obtain an explicit description of the ray class field from the following theorem, the roots of which can be traced back to the work of Hasse in ~\cite{hasse}:

\begin{thm2}
Let $E$ be an elliptic curve defined over $K(j_E)$ with End($E$) $\cong \mathcal{O}_K$ for some imaginary quadratic field $K$. Let $\mathfrak{h}$ be the Weber function on $E$. Then $K(j_E, \mathfrak{h}(E[N]))$ is the ray class field of $K$ with modulus $N\Ok$.
\end{thm2}

\begin{proof}
See, for example, Theorem 2 in ~\cite[p.126]{lang}.
\end{proof}

Note that the Weber function is model independent. That is, if $\varphi \colon E \to E'$ is an $\bar{F}$-isomorphism and $\mathfrak{h}_E$, $\mathfrak{h}_{E'}$ the Weber functions of $E$ and $E'$, respectively, we have $\mathfrak{h}_E=\mathfrak{h}_{E'} \circ \varphi$. (See \cite[p.107]{shimura}.) This allows us to extend the result of Theorem 2 to include elliptic curves defined over an arbitrary number field $F$.

\begin{cor}
Let $E$ be an elliptic curve defined over a number field $F$ with End($E$) $\cong \mathcal{O}_K$ for some imaginary quadratic field $K$. Then $K(j_E, \mathfrak{h}(E[N]))$ is the ray class field of $K$ with modulus $N\Ok$.
\end{cor}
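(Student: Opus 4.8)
The plan is to reduce to Theorem 2 by replacing $E$ with a curve defined over $K(j_E)$ having the same $j$-invariant, and then to transport the field generated by the values of the Weber function across the isomorphism between the two curves. The two ingredients I would rely on are Theorem 2 itself and the model-independence of the Weber function recorded just above.

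First I would note that since $E$ is defined over the number field $F$, its $j$-invariant lies in $F$ and is in particular algebraic, so $K(j_E)$ is again a number field. Next I would invoke the standard fact that over any field of characteristic different from $2$ and $3$, and for any element of that field, there is an elliptic curve with that prescribed $j$-invariant defined over the field. Applying this over $K(j_E)$ produces an elliptic curve $E'/K(j_E)$ with $j_{E'}=j_E$. Because the geometric endomorphism ring of an elliptic curve depends only on its $\bar{F}$-isomorphism class, and hence only on its $j$-invariant, we have $\text{End}(E')\cong\mathcal{O}_K$ as well. Theorem 2 then applies to $E'$, showing that $K(j_E,\mathfrak{h}_{E'}(E'[N]))$ is the ray class field of $K$ with modulus $N\mathcal{O}_K$.

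It remains to identify $K(j_E,\mathfrak{h}_E(E[N]))$ with $K(j_E,\mathfrak{h}_{E'}(E'[N]))$. Since $E$ and $E'$ share the same $j$-invariant, they are isomorphic over $\bar{\mathbb{Q}}$; I would fix such an isomorphism $\varphi\colon E\to E'$. As an isomorphism of elliptic curves, $\varphi$ carries $E[N]$ bijectively onto $E'[N]$. The model-independence relation $\mathfrak{h}_E=\mathfrak{h}_{E'}\circ\varphi$ then gives $\mathfrak{h}_E(E[N])=\mathfrak{h}_{E'}(\varphi(E[N]))=\mathfrak{h}_{E'}(E'[N])$ as subsets of $\bar{\mathbb{Q}}$, so the two fields coincide and the reduction is complete.

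The argument is essentially bookkeeping once Theorem 2 and the model-independence remark are available, so I do not anticipate a serious obstacle. The one point deserving care is the claim that $E'$ can be chosen over $K(j_E)$ with the correct complex multiplication, which rests on the fact that both the existence of a model with prescribed $j$-invariant and the isomorphism class of the geometric endomorphism ring are controlled entirely by $j_E$. I would also emphasize that $\varphi$ need not be defined over $K(j_E)$: only the functional identity $\mathfrak{h}_E=\mathfrak{h}_{E'}\circ\varphi$ is used, and the resulting equality of the generated fields follows regardless of the field of definition of $\varphi$.
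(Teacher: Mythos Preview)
Your proposal is correct and matches the paper's own proof essentially step for step: both pass to an elliptic curve $E'$ over $K(j_E)$ with the same $j$-invariant (the paper simply cites Silverman for its existence, while you spell out the $j$-invariant construction and note that $\mathrm{End}$ depends only on $j$), then use model-independence of the Weber function to identify $\mathfrak{h}_E(E[N])$ with $\mathfrak{h}_{E'}(E'[N])$ and invoke Theorem 2. The only cosmetic difference is that the paper takes $\varphi$ over $\mathbb{C}$ and you take it over $\bar{\mathbb{Q}}$, which is immaterial.
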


\begin{proof}
Let $E$ be an elliptic curve defined over a number field $F$ with End($E$) $\cong \mathcal{O}_K$. $E$ is isomorphic over $\mathbb{C}$ to an elliptic curve $E'$ defined over $K(j_E)$. (See \cite[p.105]{advanced}.) If we let $\varphi$ denote the isomorphism from $E$ to $E'$, the model independence of the Weber function gives $\mathfrak{h}_E(E[N])=\mathfrak{h}_{E'}(\varphi(E[N]))=\mathfrak{h}_{E'}(E'[N]).$ By Theorem 2, $K(j_{E'}, \mathfrak{h}_{E'}(E'[N]))=K(j_{E}, \mathfrak{h}_E(E[N]))$ is the ray class field of $K$ modulo $N\Ok$, as desired.
\end{proof}

\section{Proof of Main Result}

For an arbitrary elliptic curve, the mod-$\ell$ Galois representation is an injective homomorphism Gal($F(E[\ell])/F) \to \text{GL}_2(\mathbb{F}_{\ell})$. As a consequence, $[F(E[\ell]):F]$ must divide \#$\text{GL}_2(\mathbb{F}_{\ell})$. However, if $E$ has CM, much more is known:

\begin{prop}
Let $E$ be an elliptic curve with CM by $\Ok$ in $K$. Then for an odd prime $\ell$:
\begin{enumerate}
\item $\text{If } \left( \frac{d_K}{\ell} \right)=1,$ then $[F(E[\ell]):F] \mid 2(\ell -1)^2.$
\item $\text{If } \left( \frac{d_K}{\ell} \right)=-1,$ then $[F(E[\ell]):F] \mid 2(\ell^2 -1).$
\item $\text{If } \left( \frac{d_K}{\ell} \right)=0,$ then $[F(E[\ell]):F] \mid 2(\ell^2 -\ell).$
\end{enumerate}
\end{prop}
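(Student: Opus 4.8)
\noindent The plan is to reduce the bound to the order of the unit group $(\Ok/\ell\Ok)^{\times}$, which in each of the three cases is exactly the ``Euler factor'' $\mathcal{N}(\ell\Ok)\prod_{\mathfrak{p}\mid\ell}(1-\mathcal{N}(\mathfrak{p})^{-1})$ already computed in Corollary 1. First I would pass to the field $FK$. Both $F(E[\ell])$ and $FK(E[\ell])$ are generated over $F$ by a $\mathrm{Gal}(\bar F/F)$-stable set (the coordinates of the $\ell$-torsion points, together with $\sqrt{d_K}$), so both are Galois over $F$; since $F(E[\ell])\subseteq FK(E[\ell])$, the degree $[F(E[\ell]):F]$ divides $[FK(E[\ell]):F]=[FK(E[\ell]):FK]\cdot[FK:F]$. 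As $[FK:F]\le 2$, it suffices to prove that $[FK(E[\ell]):FK]$ divides $|(\Ok/\ell\Ok)^{\times}|$, the factor of $2$ absorbing $[FK:F]$.

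The key step is to pin down the Galois action over $FK$. Since $E/F$ gives $j_E\in F$ and the endomorphisms of $E$ are defined over $K(j_E)\subseteq FK$, every $\sigma\in\mathrm{Gal}(\overline{FK}/FK)$ is $\Ok$-linear on $E[\ell]$: for $\phi\in\Ok$ and $P\in E[\ell]$ we have $\sigma(\phi P)=\phi^{\sigma}(\sigma P)=\phi(\sigma P)$, because $\phi^{\sigma}=\phi$. As $E$ has CM by the \emph{maximal} order, $E[\ell]=E[\ell\Ok]$ is free of rank one over $\Ok/\ell\Ok$, whence $\mathrm{Aut}_{\Ok}(E[\ell])\cong(\Ok/\ell\Ok)^{\times}$ and the mod-$\ell$ representation restricts to an injection
\[
\mathrm{Gal}(FK(E[\ell])/FK)\hookrightarrow(\Ok/\ell\Ok)^{\times}.
\]
Hence $[FK(E[\ell]):FK]$ divides $|(\Ok/\ell\Ok)^{\times}|$.

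It then remains to compute this order according to how $\ell$ factors in $\Ok$, which for odd $\ell$ is dictated by $\left(\frac{d_K}{\ell}\right)$. If $\left(\frac{d_K}{\ell}\right)=1$ then $\ell$ splits and $\Ok/\ell\Ok\cong\mathbb{F}_{\ell}\times\mathbb{F}_{\ell}$ has $(\ell-1)^2$ units; if $\left(\frac{d_K}{\ell}\right)=-1$ then $\ell$ is inert and $\Ok/\ell\Ok\cong\mathbb{F}_{\ell^2}$ has $\ell^2-1$ units; and if $\left(\frac{d_K}{\ell}\right)=0$ then $\ell$ ramifies and $\Ok/\ell\Ok\cong\mathbb{F}_{\ell}[\epsilon]/(\epsilon^2)$ has $\ell(\ell-1)=\ell^2-\ell$ units. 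Multiplying each by the factor of $2$ from $[FK:F]$ gives the three asserted divisibilities; these orders agree with the factors of Corollary 1, reflecting that $|(\Ok/\ell\Ok)^{\times}|=\mathcal{N}(\ell\Ok)\prod_{\mathfrak{p}\mid\ell}(1-\mathcal{N}(\mathfrak{p})^{-1})$.

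I expect the second step to be the crux: the point is that over $FK$ the image lies in the full Cartan subgroup $(\Ok/\ell\Ok)^{\times}$ rather than merely in its normalizer in $\mathrm{GL}_2(\mathbb{F}_{\ell})$, which uses both that the CM is rational over $K(j_E)$ and that $E[\ell]$ is a free rank-one $\Ok/\ell\Ok$-module---facts that hold precisely because the CM is by the full ring of integers. An alternative closer to the machinery at hand would bound $[FK(\mathfrak{h}(E[\ell])):FK]$ by $[K_{\ell\Ok}:K(j_E)]=h_{\ell\Ok}/h_K$ via Corollary 2, and then control the passage from Weber values back to torsion coordinates by $|\mathrm{Aut}(E)|=w_K$; here the factor $w_K$ cancels against the $[U:U_{\mathfrak{m}}]^{-1}=w_K^{-1}$ in Corollary 1 (for $\ell\ge 5$), reproducing the same bounds. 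The representation-theoretic argument avoids this root-of-unity bookkeeping and handles all odd $\ell$ uniformly.
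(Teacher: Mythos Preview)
Your argument is correct. The paper does not actually prove this proposition; it simply cites Corollary~17 of \cite{clark1}. Your proof---passing to $FK$ so that the CM is rational, identifying $E[\ell]$ as a free rank-one $\Ok/\ell\Ok$-module, and thereby embedding $\mathrm{Gal}(FK(E[\ell])/FK)$ in $(\Ok/\ell\Ok)^{\times}$---is the standard representation-theoretic argument and is essentially what the cited reference establishes. The alternative you sketch via the ray class field and Weber function is also viable (and closer in spirit to the paper's ambient machinery), but, as you note, the Cartan-subgroup argument is cleaner and avoids the case analysis on $w_K$.
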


\begin{proof}
See, for example, Corollary 17 of ~\cite{clark1}.
\end{proof}

From these conditions, we see that it is only possible for an odd prime $\ell$ to divide the degree of $F(E[\ell])/F$ when $\ell$ divides $d_K$. This is a fact we are able to exploit:

\begin{lem} Suppose $E$ is an elliptic curve defined over a number field $F$ with complex multiplication by $\Ok$ in $K$. Suppose $\ell$ is prime and $\ell >\dfrac{w_K}{2}n+1$, where $n$ is the degree of $F/ \mathbb{Q}$. If $[F(E[\ell]): F(\mu_{\ell})]$ is $\ell$-powered, $\ell$ must divide $d_K$.
\end{lem}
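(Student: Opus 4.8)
The plan is to prove the contrapositive: assuming $\ell \nmid d_K$ --- equivalently that $\ell$ is unramified in $K$, so $\left( \frac{d_K}{\ell} \right) = \pm 1$ --- I will show that $[F(E[\ell]):F(\mu_\ell)]$ cannot be a nontrivial power of $\ell$ once $\ell > \frac{w_K}{2}n + 1$. The entry point is Proposition 2: in the split and inert cases the degree $[F(E[\ell]):F]$ divides $2(\ell-1)^2$ and $2(\ell^2-1)$ respectively, and since $\ell$ is odd both quantities are coprime to $\ell$. Hence $\ell \nmid [F(E[\ell]):F]$ whenever $\ell$ is unramified.

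Next I would invoke the Weil pairing to place $F(\mu_\ell)$ inside $F(E[\ell])$, so that $[F(E[\ell]):F(\mu_\ell)]$ divides $[F(E[\ell]):F]$ and is therefore also prime to $\ell$. If this index is assumed to be an $\ell$-power, it must then equal $1$, giving $F(E[\ell]) = F(\mu_\ell)$. The task thus reduces to showing that this equality is impossible for large $\ell$.

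To exploit the CM structure I would pass to the ray class field. By Corollary 2, $L := K(j_E, \mathfrak{h}(E[\ell]))$ is the ray class field of $K$ with modulus $\ell\Ok$; since $j_E \in F$ and the Weber coordinates $\mathfrak{h}(E[\ell])$ lie in $F(E[\ell]) = F(\mu_\ell)$, we obtain $K \subseteq L \subseteq K F(\mu_\ell)$. For a lower bound on $[L:K] = h_{\ell\Ok}$ I would use Corollary 1 together with the fact that $[U:U_{\ell\Ok}] = w_K$ for the primes at issue: a nontrivial root of unity $\zeta$ congruent to $1$ modulo a prime above $\ell$ would force that prime to have norm dividing $\mathcal{N}(\zeta - 1) \leq 4$, impossible once $\ell \geq 5$. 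This yields $h_{\ell\Ok} \geq (\ell-1)^2 / w_K$ in both unramified cases.

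Finally I would bound $[KF(\mu_\ell):K]$ from above --- using $[K(\mu_\ell):K] = \ell - 1$ (valid because $K \not\subseteq \mathbb{Q}(\mu_\ell)$ when $\ell \nmid d_K$) and $[KF(\mu_\ell):K(\mu_\ell)] \mid n$ --- and compare the two estimates. The inclusion $L \subseteq KF(\mu_\ell)$ forces $(\ell-1)^2/w_K \leq [KF(\mu_\ell):K]$, and unwinding this inequality produces a bound on $\ell$. I expect the main obstacle to be pinning down the exact constant: the direct comparison sketched above naturally yields only $\ell \leq w_K n + 1$, and sharpening this to the stated $\ell \leq \frac{w_K}{2}n + 1$ requires extracting one further factor of $2$ from the degree bookkeeping. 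The most plausible sources are the two-to-one nature of the Weber function (equivalently, the contribution of the $y$-coordinates to the gap between $L$ and the full torsion field $K(j_E, E[\ell])$), or a separate, easier treatment of the case $K \subseteq F$, where $[KF(\mu_\ell):K(\mu_\ell)]$ divides $n/2$. Carefully accounting for this factor, and checking it uniformly over the relevant small primes, is where the real work lies.
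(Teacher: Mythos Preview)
Your outline matches the paper's proof almost step for step: contrapositive, Proposition~2 to force $F(E[\ell])=F(\mu_\ell)$, then the ray class field inclusion and the lower bound $h_{\mathfrak m}\ge(\ell-1)^2/w_K$ from Corollary~3. The only real issue is the one you flag yourself, the missing factor of~$2$.

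The source of that factor is neither the Weber function nor a case split on $K\subseteq F$. The paper invokes the fact (Lemma~15 of \cite{clark1}) that for a CM elliptic curve one always has $K\subseteq F(E[\ell])$. Since you have already reduced to $F(E[\ell])=F(\mu_\ell)$, this gives $K\subseteq F(\mu_\ell)$, hence $KF(\mu_\ell)=F(\mu_\ell)$. Your containment $L\subseteq KF(\mu_\ell)$ then becomes $L\subseteq F(\mu_\ell)$, and comparing degrees over~$\mathbb{Q}$ rather than over~$K$ yields
\[
2h_{\mathfrak m}=[L:\mathbb{Q}]\le[F(\mu_\ell):\mathbb{Q}]\le n(\ell-1),
\]
which is exactly the inequality $2\cdot\frac{1}{w_K}(\ell-1)^2\le n(\ell-1)$ the paper uses. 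So the extra~$2$ is simply $[K:\mathbb{Q}]$, recovered once you know $K$ already sits inside $F(\mu_\ell)$. Your second guess was close in spirit---it is about $K$ being contained in the relevant field---but the containment holds unconditionally in $F(E[\ell])$, not merely when $K\subseteq F$.

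One minor simplification: you do not need the argument that $U_{\mathfrak m}$ is trivial; the paper just uses the crude bound $[U:U_{\mathfrak m}]\le w_K$.
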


\begin{proof}
Let $[F(E[\ell]):F(\mu_{\ell})]$ be $\ell$-powered, and suppose $\ell$ is an odd prime which does not divide $d_K$. Since $\ell \neq 2$, Proposition 2 forces $F(E[\ell])=F(\mu_{\ell})$. We will show this is a contradiction unless $\ell \leq \dfrac{w_K}{2}n+1$.

By Lemma 15 in  ~\cite{clark1}, $K \subset F(E[\ell])$. Thus $F(E[\ell])$ also contains $K(j_E, \mathfrak{h}(E[\ell]))$, the ray class field of $K$ modulo $\mathfrak{m}=\ell \Ok$. This gives us the following diagram of fields:

\begin{center}
\begin{tikzpicture}[node distance=2cm]
 \node (Q)                  {$\mathbb{Q}$};
 \node (F)  [above left of=Q, node distance=3 cm]   {$F$};
 \node (K) [above right of=Q, node distance=1.8cm] {$K$};
 \node (Fm) [above of =F, node distance=2.5cm] {$F(E[\ell]))=F(\mu_{\ell})$};
 \node (Kh) [above of =K, node distance=2.2 cm] {$K(j_E, \mathfrak{h}(E[\ell]))$};

 \draw[-] (Q) edge node[auto] {$n$} (F);
 \draw[-] (Q) edge node[right] {2} (K);
 \draw[-] (F) edge node[auto] {$\leq \ell-1$} (Fm);
 \draw[-] (K) edge node[right] {$h_{\mathfrak{m}}$} (Kh);
 \draw (Kh) -- (Fm);
 
\end{tikzpicture}
\end{center}

From Corollary 3, if $\ell$ splits in $K$ then
\begin{align*}
h_{\mathfrak{m}} &= h_K \cdot [U:U_{\mathfrak{m}}]^{-1} \cdot (\ell -1) \cdot (\ell -1)\\
&\geq 1 \cdot \dfrac{1}{w_K}\cdot (\ell -1) \cdot (\ell -1).\\
\end{align*}
Similarly, if $\ell$ is inert in $K$, 
\begin{align*}
h_{\mathfrak{m}} \geq 1 \cdot \dfrac{1}{w_K}\cdot (\ell +1) \cdot (\ell -1).\\
\end{align*}
In either case, $h_{\mathfrak{m}} \geq \dfrac{1}{w_K}\cdot (\ell -1)^2$. But
\[
2 \cdot \dfrac{1}{w_K}\cdot (\ell -1)^2 > n \cdot (\ell -1)
\]
whenever $\ell > \dfrac{w_K}{2}n+1$. In other words, $F(E[\ell]) \neq F(\mu_{\ell})$ for $\ell >  \dfrac{w_K}{2}n+1$. 
\end{proof}

In fact, the same result holds for elliptic curves with CM by an arbitrary order:

\begin{prop}
Suppose $E$ is an elliptic curve defined over a number field $F$ with complex multiplication by an order in $K$. Suppose $\ell$ is prime and $\ell >\dfrac{w_K}{2}n+1$, where $n$ is the degree of $F/ \mathbb{Q}$. If $[F(E[\ell]): F(\mu_{\ell})]$ is $\ell$-powered, $\ell$ must divide $d_K$.
\end{prop}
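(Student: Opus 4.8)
The plan is to reduce the statement to Lemma 1 by replacing $E$ with an isogenous curve whose endomorphism ring is the maximal order $\Ok$. The two facts that make this work are (i) that for primes $\ell$ in the relevant range the conductor of the order cannot be divisible by $\ell$, and (ii) that the passage to the maximal order can be carried out over $F$ itself, so that the threshold $\ell > \frac{w_K}{2}n + 1$ is preserved verbatim rather than being degraded by a field extension.

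First I would bound the conductor $f$ of the order $\mathcal{O}$. Since $j_E \in F$, the ring class field $K(j_E)$ lies in $FK$, so $h(\mathcal{O}) = [K(j_E):K] \le [FK:K] \le n$. On the other hand, the standard formula for the ring class number gives $h(\mathcal{O}) = \frac{h_K}{[\Ok^\times : \mathcal{O}^\times]}\prod_{p \mid f} p^{\,v_p(f)-1}\bigl(p - (\tfrac{d_K}{p})\bigr)$, and for $f>1$ one has $[\Ok^\times:\mathcal{O}^\times] = \tfrac{w_K}{2}$. Thus if $\ell \mid f$ and $\ell \nmid d_K$, isolating the factor at $\ell$ yields $h(\mathcal{O}) \ge \tfrac{2(\ell-1)}{w_K}$, which exceeds $n$ precisely when $\ell > \frac{w_K}{2}n + 1$. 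Hence, for $\ell$ in our range, either $\ell \mid d_K$ (and we are done) or $\ell \nmid f$.

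It therefore remains to treat the case $\ell \nmid f$, where I would construct an explicit $F$-rational isogeny to the maximal order. Let $\mathfrak{c} = f\Ok$ be the conductor ideal, regarded as an ideal of $\mathcal{O}$, and set $E[\mathfrak{c}] = \{P : \alpha P = 0 \text{ for all } \alpha \in \mathfrak{c}\}$. I would check that $E[\mathfrak{c}]$ is $G_F$-stable: for $\sigma \in G_F$ the induced map on $\mathcal{O} \cong \operatorname{End}(E)$ is either the identity or complex conjugation, and since $\mathfrak{c}$ is conjugation-stable, $\beta\,\sigma(P) = \sigma(\beta^{\sigma^{-1}}P) = 0$ for all $\beta \in \mathfrak{c}$ and $P \in E[\mathfrak{c}]$. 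Consequently $E' := E/E[\mathfrak{c}]$ and the quotient isogeny $\phi\colon E \to E'$ are defined over $F$; passing to lattices shows $\operatorname{End}(E') \cong \Ok$ and $\deg \phi = \mathcal{N}(\mathfrak{c}) = f^2$, which is prime to $\ell$. Thus $\phi$ restricts to a $G_F$-isomorphism $E[\ell] \xrightarrow{\sim} E'[\ell]$, so $F(E[\ell]) = F(E'[\ell])$ and the hypothesis that $[F(E[\ell]):F(\mu_\ell)]$ is $\ell$-powered transfers to $E'$. Since $E'$ has CM by $\Ok$, is defined over the same field $F$, and is governed by the same $w_K$ and $n$, Lemma 1 applies directly and forces $\ell \mid d_K$.

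I expect the main obstacle to be the isogeny-descent step: verifying that the maximal-order quotient can be realized over $F$ (rather than over a larger field, which would weaken the bound) requires both the Galois-stability of $E[\mathfrak{c}]$ and a careful lattice computation identifying $\operatorname{End}(E')$ and $\deg\phi$. Once this is in place, the ring class number bound confines all genuinely order-theoretic behavior to primes below the threshold, and the remaining primes are handled by Lemma 1 exactly as in the maximal-order case.
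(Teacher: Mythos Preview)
Your argument follows the same strategy as the paper: use the ring class number formula together with $h(\mathcal{O}_f)=[K(j_E):K]\le n$ to rule out $\ell\mid f$, then pass via an $F$-rational isogeny of degree prime to $\ell$ to a curve $E'/F$ with CM by $\Ok$, and invoke Lemma~1. The only difference is cosmetic: the paper obtains the isogeny by citing \cite{clark1} (where it is shown to be cyclic of degree~$f$), whereas you construct it explicitly as the quotient by $E[f\Ok]$ and verify Galois stability by hand. One small slip: your kernel $E[f\Ok]$ is actually $\Ok/\mathcal{O}$, which is cyclic of order~$f$, so $\deg\phi=f$ rather than $f^2$ (the norm $\mathcal{N}(f\Ok)=f^2$ is taken in $\Ok$, but the relevant index is in $\mathcal{O}$); this is harmless, since all you need is that the degree is supported on primes dividing $f$ and hence is coprime to~$\ell$.
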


\begin{proof}
Suppose E has CM by the order $\mathcal{O}_f = \mathbb{Z} + f\Ok$ in $K$. Then there exists an $F$-rational isogeny $\varphi \colon E \to E'$ where $E'$ is defined over $F$ and has CM by $\Ok$. Since $\varphi$ is cyclic of degree $f$ (Proposition 25 in ~\cite{clark1}), we need only to show that $f$ and $\ell$ are relatively prime. This will ensure the induced map $\varphi \colon E[\ell] \to E'[\ell]$ is in fact an isomorphism, and since the isomorphism is defined over $F$, we will have $F(E[\ell])=F(E'[\ell])$. The result will then be a consequence of the previous lemma.

Let $f=p_1^{a_1} \cdots p_r^{a_r}$ be the prime factorization of $f$, with $p_1<p_2<\ldots<p_r$. As shown in ~\cite[p.146]{cox}, the class number of $\mathcal{O}_f$ satisfies:
\[
h(\mathcal{O}_f) = \frac{h(\Ok) p_1^{a_1 -1} \cdots p_r^{a_r -1}}{[\Ok^{\times}:\mathcal{O}_f^{\times}]} \prod_{i=1}^r \left(p_i-\left( \frac{d_K}{p_i} \right) \right).
\]
Since $| \Ok^{\times}| = w_K$ and $|\mathcal{O}_f^{\times}| \geq 2$,
\begin{align*}
h(\mathcal{O}_f) &\geq \frac{h(\Ok) p_1^{a_1 -1} \cdots p_r^{a_r -1}}{w_K/2} \prod_{i=1}^r \left(p_i-\left( \frac{d_K}{p_i} \right) \right)\\
&\geq \frac{2}{w_K} \left(p_r-\left( \frac{d_K}{p_r} \right) \right)\\
&\geq \frac{2}{w_K} \left(p_r-1 \right).
\end{align*}
We may obtain an upper bound on $h(\mathcal{O}_f)$ by recalling that $K(j_E)$ is the ring class field of $K$ of the order $\mathcal{O}_f$ (see ~\cite[p.220]{cox}). Thus $[K(j_E):K] =$ \#cl$(\mathcal{O}_f)$, and $h(\mathcal{O}_f) \leq n$. Combining this with the inequality above, we find $p_r \leq \dfrac{w_K}{2}n+1$. Since $\ell > \dfrac{w_K}{2}n+1$, this is enough to conclude $\ell$ and $f$ are relatively prime, as desired.
\end{proof}

If $n$ is odd we can extend the result to all odd primes:

\begin{cor}
Suppose $E$ is an elliptic curve defined over a number field $F$ with complex multiplication by an order in $K$. Suppose the degree of $F/ \mathbb{Q}$ is odd, and let $\ell$ be an odd prime number. If $[F(E[\ell]): F(\mu_{\ell})]$ is $\ell$-powered, $\ell$ must divide $d_K$.
\end{cor}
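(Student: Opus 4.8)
The plan is to argue by contradiction: assuming $\ell \nmid d_K$ for an odd prime $\ell$ satisfying the hypothesis, I will produce a quadratic subfield of $F$, which is impossible once $n = [F:\mathbb{Q}]$ is odd. The two standing facts I will use are that $\mu_\ell \subseteq F(E[\ell])$ (from the Weil pairing) and that $K \subseteq F(E[\ell])$. The latter follows from the argument behind Lemma 15 of \cite{clark1}, which applies to any order since $\ell$-torsion with $\ell \geq 3$ rigidifies the action of $\mathrm{End}(E)$ and so forbids complex conjugation from fixing $F(E[\ell])$ while conjugating the CM. Consequently $F(\mu_\ell) \subseteq F(E[\ell])$, and the compositum $F(\mu_\ell)K$ lies in $F(E[\ell])$. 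Working directly with $K \subseteq F(E[\ell])$ in this way lets me avoid any isogeny reduction to the maximal order, which is exactly the step that would break down at small primes dividing the conductor $f$.

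The first key step is to promote $K \subseteq F(E[\ell])$ to $K \subseteq F(\mu_\ell)$. Since $[F(\mu_\ell)K : F(\mu_\ell)] \leq [K:\mathbb{Q}] = 2$ and $F(\mu_\ell)K \subseteq F(E[\ell])$, this degree divides $[F(E[\ell]):F(\mu_\ell)]$, which by hypothesis is a power of the odd prime $\ell$ and hence odd. An odd number is not divisible by $2$, so $[F(\mu_\ell)K:F(\mu_\ell)] = 1$, i.e. $K \subseteq F(\mu_\ell)$. This step uses only that $\ell$ is odd and never references $d_K$; the hypothesis that $n$ is odd enters in the next step.

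The second key step exploits cyclicity. Because $n$ is odd, $K \not\subseteq F$ (otherwise $2 = [K:\mathbb{Q}]$ would divide $n$), so $FK/F$ is a genuine quadratic subextension of $F(\mu_\ell)/F$. The group $\mathrm{Gal}(F(\mu_\ell)/F)$ embeds into $(\mathbb{Z}/\ell\mathbb{Z})^{\times}$ and is therefore cyclic, so $F(\mu_\ell)/F$ has at most one quadratic subextension. The quadratic subfield of $\mathbb{Q}(\mu_\ell)$ is $\mathbb{Q}(\sqrt{\ell^*})$ with $\ell^* = (-1)^{(\ell-1)/2}\ell$; after checking $\sqrt{\ell^*}\notin F$ (else $\mathbb{Q}(\sqrt{\ell^*})\subseteq F$ already forces $2 \mid n$), uniqueness forces $FK = F(\sqrt{\ell^*})$, whence $d_K\ell^*$ is a square in $F^{\times}$ and $\sqrt{d_K\ell^*}\in F$.

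Finally, I analyze $\mathbb{Q}(\sqrt{d_K\ell^*})$. Since $\ell$ is odd and $\ell \nmid d_K$, the integer $d_K\ell^*$ carries $\ell$ to the first power and so is not a perfect square, making $\mathbb{Q}(\sqrt{d_K\ell^*})$ an honest quadratic field inside $F$ and contradicting $n$ odd. This forces $\ell \mid d_K$. Because the argument is uniform in $\ell$, it simultaneously recovers the conclusion of Proposition 4 for odd $n$ with no size restriction and no passage to the maximal order. The steps I expect to require the most care are securing $K \subseteq F(E[\ell])$ for a non-maximal order (so that the conductor-divisibility obstruction never arises) and pinning down the clean arithmetic input that $d_K\ell^*$ fails to be a square precisely when $\ell \nmid d_K$.
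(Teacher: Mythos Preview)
Your argument is correct and follows the same opening as the paper: both use $K\subseteq F(E[\ell])$ and the $\ell$-power hypothesis to force $K\subseteq F(\mu_\ell)$. The difference is only in how the contradiction with $n$ odd is extracted. You work \emph{over} $F$: invoke cyclicity of $\mathrm{Gal}(F(\mu_\ell)/F)$ to pin down the unique quadratic subextension as $F(\sqrt{\ell^*})$, deduce $FK=F(\sqrt{\ell^*})$, and then use Kummer theory to produce the quadratic field $\mathbb{Q}(\sqrt{d_K\ell^*})\subseteq F$. The paper instead works \emph{under} $F(\mu_\ell)$: since $\ell\nmid d_K$ the field $K$ is not the quadratic subfield of $\mathbb{Q}(\mu_\ell)$, so $K\mathbb{Q}(\mu_\ell)/\mathbb{Q}(\mu_\ell)$ has degree $2$ inside $F(\mu_\ell)$, whence $2\mid [F(\mu_\ell):\mathbb{Q}(\mu_\ell)]=[F:F\cap\mathbb{Q}(\mu_\ell)]\mid n$. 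Your route is a little longer but yields the same conclusion; the paper's route avoids the cyclicity and Kummer steps entirely.
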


\begin{proof}
Suppose $\ell \nmid d_K$, and assume for the sake of contradiction that $[F(E[\ell]): F(\mu_{\ell})]$ is $\ell$-powered. By Lemma 15 in  ~\cite{clark1}, $K=\mathbb{Q}(\sqrt{D}) \subset F(E[\ell])$. In fact, $K \subseteq F(\mu_{\ell})$, for otherwise $F(\mu_{\ell})(\sqrt{D})$ would be a proper extension of $F(\mu_{\ell})$ contained in $F(E[\ell])$ and 2 would divide $[F(E[\ell]): F(\mu_{\ell})]$. However, since $\ell \nmid d_K$, we know $K \nsubseteq \mathbb{Q}(\mu_{\ell})$. Thus $\mathbb{Q}(\mu_{\ell})(\sqrt{D})$ is a proper extension of $\mathbb{Q}(\mu_{\ell})$ contained in $F(\mu_{\ell})$. Since $[F(\mu_{\ell}): \mathbb{Q}(\mu_{\ell})]=[F:\mathbb{Q}(\mu_{\ell}) \cap F]$, this forces $2 \mid [F:\mathbb{Q}(\mu_{\ell}) \cap F]$, which is a contradiction.
\end{proof}

We are now ready to prove our main result:

\begin{thm1}
Let $F$ be a number field with $[F:\mathbb{Q}]=n$. There exists a constant $C=C(n)$ depending only on $n$ with the following property: If there exists a CM elliptic curve $E/F$ with $F(E[\ell^\infty])$ a pro-$\ell$ extension of $F(\mu_{\ell})$ for some rational prime $\ell$, then $\ell \leq C$.
\end{thm1}

\begin{proof}
Suppose there exists a CM-elliptic curve  $E/F$ with $F(E[\ell^\infty])$ a pro-$\ell$ extension of $F(\mu_{\ell})$. Thus $[F(E[\ell]): F(\mu_{\ell})]$ is $\ell$-powered. Since $w_K \leq 6$ for an imaginary quadratic field $K$, the previous proposition shows $\ell \leq 3n+1$ or $\ell \mid d_K$ where $K$ is the CM-field of $E$. However, since $h(\Ok)$ divides \#cl$(\mathcal{O}_f)=[K(j_E):K] \leq n$, it follows that $K$ has class number less than or equal to $n$. As there are only a finite number of such $K$, proved by Heilbronn in \cite{heilbronn}, the result follows.
\end{proof}

It is clear that obtaining an explicit bound depends only on knowing the imaginary quadratic fields with a given class number, i.e., it depends on having a solution to the Gauss class number problem for imaginary quadratic fields. For class numbers up through 7 and odd class numbers up to 23, complete lists of the corresponding fields exist (see ~\cite{arno2} for a history of the many mathematicians involved in the early work on this problem and for a list of imaginary quadratic fields with odd class number up to 23; see ~\cite{stark}, ~\cite{arno}, ~\cite{wagner} for lists of imaginary quadratic fields of class number 2, 4, and 6, respectively). More recent work by Watkins in \cite{watkins} gives a solution for class numbers up to 100. To illustrate how these results may be used, we have compiled a table of bounds for elliptic curves defined over a number field $F$ of degree $n$ where $n \leq 7$:

\begin{center}
\begin{tabular}{l l}
$n$ & $\hfill C(n)$\\
\hline
1, 2 & \hspace{.5 cm} \hfill 163\\
3, 4 & \hfill 907\\
5, 6 & \hfill 2683\\
7 & \hfill 5923\\
\end{tabular}
\end{center}

We justify the claimed bounds. Suppose there exists a CM-elliptic curve $E/F$ with $F(E[\ell^\infty])$ a pro-$\ell$ extension of $F(\mu_{\ell})$. Then $[F(E[\ell]): F(\mu_{\ell})]$ is $\ell$-powered, and by Proposition 3, we know $\ell \leq \dfrac{w_K}{2}n+1 \leq 3n+1$ or $\ell$ divides $d_K$ where $K$ is the CM field of $E$. As mentioned in the proof of Theorem 1, the class number of $K$ is less than or equal to $n$, so we need only consult the lists of the discriminants of imaginary quadratic fields satisfying this constraint. A check of the possible primes dividing those discriminants yields the bounds above. Since Rasmussen and Tamagawa in  \cite{ras1} find an example of a CM-elliptic curve defined over $\mathbb{Q}$ with $\mathbb{Q}(E[163^{\infty}])$ a pro-$163$ extension of $\mathbb{Q}(\mu_{163})$, we see that in fact 163 is the best possible bound for $n=1$ and $n=2$.

We may also achieve a rough bound when $F$ has degree up to 100. In Table 4 from Watkins paper \cite{watkins}, he records the largest fundamental discriminant (in absolute value) for each class number up to 100. This is sufficient to generate additional bounds. For example, we know the largest fundamental discriminant  in Watkins's table, 2383747, occurs when $K$ has class number 98. Hence the largest possible prime dividing any discriminant is 2383739, so $C(n) \leq 2383739$ for all $n \leq 100$.

\section{Closing Remarks}

Although finding the conditions necessary for $[F(E[\ell]): F(\mu_{\ell})]$ to be $\ell$-powered was enough to prove the uniform bound in this paper, it is desirable to discover sufficient conditions as well. As discussed in the introduction, elliptic curves possessing this characteristic help us better understand $\san(F, \ell)$, provided they also have good reduction away from $\ell$. Here, we present two additional applications.

If $[F(E[\ell]): F(\mu_{\ell})]$ is $\ell$-powered, this partially determines the form of the Galois representation attached to $E$. Let $\rho_{E,\ell}:G_F \to GL_2(\mathbb{F}_{\ell})$ be the mod $\ell$ Galois representation, let $\chi$ be the cyclotomic character mod $\ell$, and let $\delta = [\mathbb{F}_{\ell}^{\times}: \chi(G_F)]$. Then by a result of Rasmussen and Tamagawa:

\begin{lem}
Suppose $E$ is an elliptic curve defined over a number field $F$ where $[F(E[\ell]): F(\mu_{\ell})]$ is $\ell$-powered. Then there exists a basis of $E[\ell]$ with respect to which
\[
\rho_{E,\ell}(G_F)=
\left[\begin{matrix}
\chi^{i_1} & *\\ 0& \chi^{i_2}
\end{matrix}\right].
\]
Furthermore, $i_1$, $i_2$ may be chosen to be nonnegative integers less than $\frac{\ell - 1}{\delta}$.
\end{lem}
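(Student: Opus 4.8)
The plan is to work entirely with the image $G := \rho_{E,\ell}(G_F) \subseteq GL_2(\mathbb{F}_\ell)$ and translate the field-theoretic hypothesis into a group-theoretic one. Since the Weil pairing gives $\det \rho_{E,\ell} = \chi$, the subfield of $F(E[\ell])$ fixed by $\ker\chi$ is exactly $F(\mu_\ell)$, and $\rho_{E,\ell}$ carries $\mathrm{Gal}(F(E[\ell])/F(\mu_\ell))$ isomorphically onto $P := G \cap SL_2(\mathbb{F}_\ell) = \ker(\det|_G)$. Thus the hypothesis that $[F(E[\ell]):F(\mu_\ell)]$ is a power of $\ell$ says precisely that $P$ is an $\ell$-group. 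The two things to prove are then (i) that $G$ is conjugate into the upper-triangular Borel subgroup, i.e. that $E[\ell]$ has a $G$-stable line, and (ii) that in such a basis the diagonal characters are powers of $\chi$ with exponents controllable as claimed.

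For (i), the key observation is that $P$ is normal in $G$, since $SL_2(\mathbb{F}_\ell)$ is normal in $GL_2(\mathbb{F}_\ell)$. First I would invoke the standard fact that a nontrivial $\ell$-group acting on the nonzero $\mathbb{F}_\ell$-vector space $E[\ell]$ has a nonzero fixed subspace $W := E[\ell]^P$; because $P \trianglelefteq G$, this $W$ is automatically $G$-stable. If $\dim_{\mathbb{F}_\ell} W = 1$ we are done. The remaining case $W = E[\ell]$ forces $P = \{1\}$, whence $\det|_G$ is injective and $G$ embeds in the cyclic group $\mathbb{F}_\ell^\times$; every $g \in G$ then satisfies $g^{\ell-1}=1$, so its minimal polynomial divides the separable polynomial $x^{\ell-1}-1$, the element $g$ is diagonalizable over $\mathbb{F}_\ell$, and the cyclic group $G$ fixes each eigenline. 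This last case is where I expect the genuine difficulty to hide: a priori $G$ could be an irreducible subgroup of a nonsplit Cartan, admitting no stable line over $\mathbb{F}_\ell$, and the crux is that the $\ell$-power hypothesis on $P$ rules this out. A nonsplit Cartan meets $SL_2(\mathbb{F}_\ell)$ in the cyclic group of norm-one elements, of order $\ell+1$, which is prime to $\ell$, so $P$ being an $\ell$-group forces $P$ trivial and then $G$ into the scalars. Routing the whole argument through the fixed-space $W$ as above lets me sidestep the full case-by-case classification of subgroups of $GL_2(\mathbb{F}_\ell)$.

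For (ii), I fix a basis adapted to a $G$-stable line, so that $\rho_{E,\ell}(G_F)$ is upper-triangular with diagonal characters $\psi_1,\psi_2 : G_F \to \mathbb{F}_\ell^\times$ satisfying $\psi_1\psi_2 = \det\rho_{E,\ell} = \chi$. I would first show each $\psi_j$ is trivial on $\ker\chi$: the image $\rho_{E,\ell}(\ker\chi)$ is exactly $P$, and the diagonal entries of a matrix in $P \subseteq SL_2(\mathbb{F}_\ell)$ lie in $\mathbb{F}_\ell^\times$, hence have order dividing $\ell-1$; being simultaneously of $\ell$-power order, they must equal $1$. So $P$ is unipotent and $\psi_j|_{\ker\chi}=1$, which means each $\psi_j$ factors through $\chi(G_F) =: \Delta$, a cyclic subgroup of $\mathbb{F}_\ell^\times$ of order $(\ell-1)/\delta$, giving $\psi_j = \alpha_j \circ \chi$ for a homomorphism $\alpha_j : \Delta \to \mathbb{F}_\ell^\times$.

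Finally I would identify each $\alpha_j$ as a power map. Since $\Delta$ is cyclic of order $(\ell-1)/\delta$, the image of a generator $\zeta$ has order dividing $(\ell-1)/\delta$ and so lies in the unique subgroup of $\mathbb{F}_\ell^\times$ of that order, which is $\Delta$ itself; writing $\alpha_j(\zeta)=\zeta^{i_j}$ with $0 \le i_j < (\ell-1)/\delta$ yields $\alpha_j(x)=x^{i_j}$ on all of $\Delta$, hence $\psi_j = \chi^{i_j}$ with the stated bound. The supporting facts — existence of fixed vectors for $\ell$-groups and cyclicity of $\mathbb{F}_\ell^\times$ — are standard, so the only real obstacle is the reducibility step in (i).
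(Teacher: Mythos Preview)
Your proof is correct. The paper, however, does not actually prove this lemma: it simply cites Lemma~3 of \cite{ras1} and a more general version in \cite{ras2}, remarking only that the ramification hypothesis imposed there plays no role. So your self-contained argument supplies more than the paper itself does. The route you take---rephrasing the hypothesis as ``$P=G\cap SL_2(\mathbb{F}_\ell)$ is an $\ell$-group,'' producing a $G$-stable line from the fixed space $E[\ell]^P$ (normality of $P\trianglelefteq G$ making it $G$-stable, with the degenerate case $P=\{1\}$ handled by diagonalizing a cyclic group of order dividing $\ell-1$ over $\mathbb{F}_\ell$), and then showing the diagonal characters kill $\ker\chi$ and hence factor through the cyclic group $\chi(G_F)$ as powers of $\chi$ with exponents bounded by $(\ell-1)/\delta$---is the standard one and is essentially what the cited Rasmussen--Tamagawa arguments do. Your aside on the nonsplit Cartan is a helpful sanity check on where irreducibility could have obstructed things, but it is not needed for the argument as written, since the fixed-space reasoning already covers every case uniformly.
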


\begin{proof}
A version of this appears as Lemma 3 in \cite{ras1}, and a more general version appears  in \cite{ras2}. Note that although the result in \cite{ras2} is stated for abelian varieties $A/F$ where $F(A[\ell^\infty])$ is both a pro-$\ell$ extension of $F(\mu_{\ell})$ and unramified away from $\ell$, the ramification requirement is not used in the proof.
\end{proof}

Knowing the sufficient conditions for $[F(E[\ell]): F(\mu_{\ell})]$ to be $\ell$-powered would also establish when there exist $\ell$-torsion points rational over $F(\mu_{\ell})$:

\begin{lem}
Suppose $E$ is an elliptic curve defined over a number field $F$. $E$ has a non-trivial $\ell$-torsion point rational over $F(\mu_{\ell})$ if and only if $[F(E[\ell]): F(\mu_{\ell})]$ is $\ell$-powered.
\end{lem}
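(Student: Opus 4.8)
The plan is to translate the statement into a question about the image of the mod-$\ell$ Galois representation over $L\vcentcolon= F(\mu_\ell)$ and then reduce to elementary linear algebra over $\mathbb{F}_\ell$. The Weil pairing shows $\mu_\ell\subseteq F(E[\ell])$, so $L\subseteq F(E[\ell])$ and $F(E[\ell])/L$ is Galois. I would restrict $\rho_{E,\ell}$ to $G_L$ and set $H\vcentcolon=\rho_{E,\ell}(G_L)\subseteq \mathrm{GL}_2(\mathbb{F}_\ell)$; since $F(E[\ell])$ is generated over $L$ by the $\ell$-torsion, the identification $\mathrm{Gal}(F(E[\ell])/L)\cong H$ is faithful and $|H|=[F(E[\ell]):L]$. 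The crucial observation is that because $L$ already contains $\mu_\ell$, the character $\chi$ is trivial on $G_L$; as $\det\circ\rho_{E,\ell}=\chi$, this forces $H\subseteq\mathrm{SL}_2(\mathbb{F}_\ell)$. Finally, a nontrivial $\ell$-torsion point rational over $L$ is exactly a nonzero vector of $E[\ell]\cong\mathbb{F}_\ell^2$ fixed by $H$, so the lemma becomes the group-theoretic statement: a subgroup $H\subseteq\mathrm{SL}_2(\mathbb{F}_\ell)$ has a nonzero fixed vector if and only if $|H|$ is a power of $\ell$.

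For the direction assuming $[F(E[\ell]):L]=|H|$ is $\ell$-powered, $H$ is an $\ell$-group acting on the finite set $E[\ell]$. I would invoke the standard fact that for an $\ell$-group the number of fixed points is congruent modulo $\ell$ to the size of the set: since $|E[\ell]|=\ell^2\equiv 0\pmod\ell$ and the zero vector is always fixed, the fixed set has cardinality divisible by $\ell$, hence contains a nonzero vector. This yields the desired nontrivial $\ell$-torsion point rational over $L$.

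For the converse, suppose $H$ fixes a nonzero $v\in E[\ell]$. Extending $v$ to a basis, every element of $H$ has the form $\begin{pmatrix} 1 & * \\ 0 & * \end{pmatrix}$; the containment $H\subseteq\mathrm{SL}_2(\mathbb{F}_\ell)$ forces the lower-right entry to be $1$, so $H$ consists of unipotent matrices $\begin{pmatrix} 1 & * \\ 0 & 1 \end{pmatrix}$ and $|H|$ divides $\ell$. Thus $[F(E[\ell]):L]$ is a power of $\ell$. I expect no serious obstacle; the only genuine content — and the step that truly requires base change to $F(\mu_\ell)$ rather than to $F$ — is the reduction to $\mathrm{SL}_2(\mathbb{F}_\ell)$, where the Sylow $\ell$-subgroups are the unipotent groups of order exactly $\ell$ (recall $|\mathrm{SL}_2(\mathbb{F}_\ell)|=\ell(\ell^2-1)$). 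Without the determinant constraint the stabilizer of a vector would have order $\ell(\ell-1)$, and the equivalence would break down.
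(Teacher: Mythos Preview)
Your argument is correct and is essentially the same as the paper's: both directions hinge on the observation that $\det\circ\rho_{E,\ell}=\chi$ forces the image over $F(\mu_\ell)$ into $\mathrm{SL}_2(\mathbb{F}_\ell)$, after which the existence of a fixed vector for an $\ell$-group follows from orbit counting (the paper just cites the Orbit-Stabilizer Theorem or its Lemma~2), and the converse is the same unipotent computation you give. Your write-up is simply more self-contained than the paper's one-paragraph sketch.
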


\begin{proof}
Suppose $P \in E[\ell]$ is rational over $F(\mu_{\ell})$. Then we can choose a basis $\{P, Q\}$ of $E[\ell]$, yielding Gal$(F(E[\ell])/F(\mu_{\ell})) \cong $
$\left<
\left[\begin{smallmatrix}
1&b\\ 0&1
\end{smallmatrix}\right]
\right>$,
where $b \in \mathbb{F}_{\ell}$. (Recall the determinant of this matrix will equal the cyclotomic character, which is trivial in this extension.) But this group has size 1 or $\ell$. The other direction is a result of the Orbit-Stabilizer Theorem, but we can also see it as an immediate consequence of Lemma 2.
\end{proof}

Unfortunately, the converse of Proposition 3 does not hold. As a counterexample, consider the elliptic curve defined by the equation
\[
y^2=x^3-595x+5586
\]
at the prime $\ell=7$. This curve has CM by an order in $K=\mathbb{Q}(\sqrt{-7})$, so $\ell$ divides $d_K$ and $\ell > 3\cdot1+1$. By Lemma 4 in \cite{Dieulefait}, $\sqrt{7}$ is contained in $\mathbb{Q}(E[\ell])$. Since $\sqrt{7}$ is not contained in $\mathbb{Q}(\mu_{\ell})$, we find that $\mathbb{Q}(\mu_{\ell})(\sqrt{7})$ gives a degree 2 extension of $\mathbb{Q}(\mu_{\ell})$ inside of $\mathbb{Q}(E[\ell])$. In other words, 2 divides $[F(E[\ell]): F(\mu_{\ell})]$ and so the desired extension is not $\ell$-powered.

\section{Acknowledgements}
The author is grateful to her advisor, Chris Rasmussen, for suggesting the problem and for his guidance in preparing this paper. The author also thanks Akio Tamagawa for his helpful comment on the Weber function.
\vspace{2 cm}

\bibliographystyle{amsplain}
\bibliography{bibliography1}

\end{CJK}
\end{document}